\def\@listI{\leftmargin\leftmargini
    \parsep\parskip \itemsep -\parsep
    \itemsep 2pt}
\let\@listi\@listI
\theoremstyle{definition}
\newtheorem{definition}{Definition}[section]
\newtheorem{example}[definition]{Example}
\theoremstyle{plain}
\newtheorem{lemma}[definition]{Lemma}
\newtheorem{theorem}[definition]{Theorem}
\newtheorem{corollary}[definition]{Corollary}
\newcommand\A{{\mathbf A}}
\newcommand\B{{\mathbf B}}
\newcommand\I{{\mathbf I}}
\newcommand\V{{\mathcal{V}}}
\newcommand\PWK{\ensuremath{\mathrm{PWK}}\xspace}
\newcommand{\bit}{\begin{itemize}}    
\newcommand{\eit}{\end{itemize}}
\newcommand{\ben}{\begin{enumerate}}
\newcommand{\een}{\end{enumerate}}
\newcommand{\benroman}{\ben[\normalfont (i)]}  
\let\eroman\een
\newcommand{\bde}{\begin{description}}
\newcommand{\ede}{\end{description}}
\newcommand{\Var}{\mathnormal{V\mkern-.8\thinmuskip ar}}
\DeclareMathAlphabet{\mathbfsf}{\encodingdefault}{\sfdefault}{bx}n
\providecommand*{\Dashv}{\mathrel{\mathpalette\@Dashv\vDash}}
\newcommand*{\@Dashv}[2]{\reflectbox{$\m@th#1#2$}}
\renewcommand\geq{\geqslant}
\newcommand\leqs{\leqslant}
\newcommand\pair[1]{{\langle#1\rangle}}
\newcommand\PL{{\mathcal{P}_{l}}}
\newcommand\PLA{{\mathcal{P}_{l} (\mathbb{A})}}
\newcommand\PLB{{\mathcal{P}_{l} (\mathbb{B})}}
\newcommand\phih{{\varphi_{_h}}}
\begin{document}

\title[Dualities for Plonka sums]{Dualities for Plonka sums}

\author[S.Bonzio]{Stefano Bonzio}
\address{Stefano Bonzio, Universit\`a Politecnica delle Marche\\
Dipartimento di Scienze biomediche e sanit\`a pubblica\\
Ancona, Italy}
\email{stefano.bonzio@gmail.com}




\subjclass[2010]{Primary 08C20. Secondary 06E15, 18A99, 22A30.}
\keywords{P\l onka sums, regular varieties, topological dualities, weak Kleene logics.}
\date{}

\begin{abstract}
P\l onka sums consist of an algebraic construction similar, in some sense to direct limits, which allows to represent classes of algebras defined by means of regular identities (namely those equations where the same set of variables appears on both sides). Recently, P\l onka sums have been connected to logic, as they provide algebraic semantics to logics obtained by imposing a syntactic filter to given logics.  
In this paper, I  present a very general topological duality for classes of algebras admitting a P\l onka sum representation in terms of dualisable algebras. 
\end{abstract}

\maketitle

\section{Introduction}

A formal identity $\varphi \thickapprox \psi$ is said to be \emph{regular} provided that exactly the same variables occur in the terms $\varphi$ and $\psi$. A variety $\V$ is called \emph{regular} whenever it satisfies identities which are regular only. The aim of this paper is showing a very simple way to construct topological dualities for regular varieties 
via the use of P\l onka sums.

On the other hand, a variety 
satisfying at least one identity which is not regular, is called \emph{irregular}. A relevant subclass of irregular variety is formed by the strongly irregular ones. A variety $\V$ is called \emph{strongly irregular} if it satisfies an identity of the kind $f(x,y)\approx x$, where $f(x, y)$ is any term of the language in which $x$ and $y$ really occur. 
Examples of strongly irregular varieties abound in logic, since every variety with a lattice reduct is irregular as witnessed by the term $f(x, y) \coloneqq x \land (x \lor y)$. 

The algebraic study of regular varieties traces back to the pioneering work of P\l onka \cite{Plo67}, who introduced a new class-operator $\PL(\cdot)$ nowadays called \textit{P\l onka sum}, and used it to prove that any regular variety $\V$ can be represented as P\l onka sum of a suitable strongly irregular variety $\V'$, in symbols $\PL(\V') = \V$. In this case $\V$ is called the \textit{regularization} of $\V'$, in a sense that we will be made precise.

Although the whole theory of P\l onka sum is purely algebraic, regular varieties have found applications in computer science, in particular in the theory of program semantics (see \cite{Libkin,Libkintechnical,Puhlmann}). Recently, P\l onka sums have been surprisingly connected to logic. Indeed, the algebraic semantics of one among the logics within the so-called Kleene family \cite{Kleene}, namely paraconsistent Weak Kleene logic, \PWK for short, coincide with the regularization of the variety of Boolean algebras, firstly axiomatised in \cite{Plonka69,plonka1984sum}.

\PWK has been essentially introduced by Halld\'en \cite{Hallden} and defended by Prior \cite{Prior} as a logic for handling reasonings that involve meaningless expressions and references to non-existing objects, respectively. The relation between \PWK and classical logic has been recently investigated in \cite{CC1+}, while proofs systems can be found in \cite{cc,Petrukhin}. The details of the connection between the logic \PWK and the regularization of Boolean algebras, also referred to as \emph{involutive bisemilattices}, are extensively studied in \cite{Bonzio16,tesiLuisa}. 

The link between logics and P\l onka sums can indeed be pushed further: the construction of P\l onka sums, originally devised for algebras only, can be extended also to logical matrices \cite{Bonzio18}, in such a way to provide algebraic semantics to the logics of variables inclusion. In detail, given a logic $\vdash$, a new consequence relation, denoted\footnote{The notation aims at stressing that the referred variable inclusion constraint goes from premises to conclusion, roughly speaking, from left to right.} by $\vdash^{l}$, can defined as follows: 
\[
\Gamma \vdash^{l} \varphi \Longleftrightarrow \text{ there is }\Delta \subseteq \Gamma \text{ s.t. }Var(\Delta) \subseteq Var(\varphi) \text{ and }\Delta \vdash \varphi.
\]

The models of the logic $\vdash^{l}$ are obtained out of matrix models of $\vdash$ via the construction of the P\l onka sum (see \cite{Bonzio18} for details). As a consequence, logics of variables inclusion embrace the class of logics often referred as \emph{infectious} (see \cite{Szmuc, Ferguson2015}), as they are semantically defined by a matrix containing a value that infects every operation in which it takes part (the logic \PWK is a prototypical example): P\l onka sums is indeed to most appropriate algebraic tool to express, algebraically, the notion of \emph{contamination}. Examples of logics of variables inclusion are also introduced in \cite{Ciuni1, Ciuni2}. In particular, they are applied for both modeling computer-programs affected by errors \cite{Ferguson} and in recent developments in the theory of truth \cite{Szmuctruth}.


On a different stream of research, the study of topological dualities for regular varieties traces back to the work of Gierz and Romanowska for distributive bisemilattices \cite{Romanowska}, the regularization of distributive lattices. The technique used there has been generalized a few years later to regular varieties in \cite{Romanowska96,Romanowska97} (a different approach can be found in \cite{Davey99}). 

We recently stated a slightly different duality, still based on P\l onka sums, for involutive bisemilattices, see \cite{nostraduality} (differences will be briefly explained in Section \ref{sec: dir,inv systems}). Dualities for (some) varieties of bisemilattices, although not relying on P\l onka sums, are considered in \cite{Ledda2017}. 

At the light of the above mentioned connection between logics (of variables inclusion) and P\l onka sums of (system of) algebras, the aim of this paper is provide a very general method for constructing topological dualities for algebras admitting a P\l onka sum representation in terms of dualisable algebraic structures (see Corollary \ref{cor: strong-dir-BA duale di strong-inv-SA}). 


The paper is structured as follows: Section \ref{sec: Preliminari} recalls the main results concerning the construction of P\l onka sums and their connection with regular varieties which will be used to implement our duality. Section \ref{sec: dir,inv systems} is devoted to introduce the categories used to build the duality, namely semilattice direct and inverse systems of an arbitrary category. Finally, Section \ref{sec: dualita} presents the main result.

\section{Preliminaries}\label{sec: Preliminari}

We start by providing all the necessary notions to construct P\l onka sums; then we will recall the connection with regular varieties.

For standard information on P\l onka sums we refer the reader to \cite{Plo67a, Plo67,Romanowska92, romanowska2002modes}. A \textit{semilattice} is an algebra $\A = \langle A, \lor\rangle$, where $\lor$ is a binary commutative, associative and idempotent operation. Given a semilattice $\A$ and $a, b \in A$, we set
\[
a \leq b \Longleftrightarrow a \lor b = b.
\]
It is easy to see that $\leq$ is a partial order on $A$.
\begin{definition}\label{Def:Directed-System-Matrices}
A \textit{semilattice direct system of algebras} consists in 
\benroman
\item a semilattice $I = \langle I, \lor\rangle$;
\item a family of algebras $\{ \A_{i} : i \in I \}$ with disjoint universes;
\item a homomorphism $f_{ij} \colon \A_{i} \to \A_{j}$, for every $i, j \in I$ such that $i \leq j$;
\eroman
moreover, $f_{ii}$ is the identity map for every $i \in I$, and if $i \leq j \leq k$, then $f_{ik} = f_{jk} \circ f_{ij}$.
\end{definition}

Let $X$ be a semilattice direct system of algebras as above. The \textit{P\l onka sum} over $X$, in symbols $\PL(X)$ or $\PL(\A_{i})_{i \in I}$, is the algebra defined as follows. The universe of $\PL(\A_{i})_{i \in I}$ is the union $\bigcup_{i \in I}A_{i}$. Moreover, for every $n$-ary basic operation $f$ (with $n\geq 1$)\footnote{In presence of nullary operations it is necessary to assume that $I$ also has a lower bound. See \cite{plonka1984sum} for details.}, and $a_{1}, \dots, a_{n} \in \bigcup_{i \in I}A_{i}$, we set
\[
f^{\PL(\A_{i})_{i \in I}}(a_{1}, \dots, a_{n}) \coloneqq f^{\A_{j}}(f_{i_{1} j}(a_{1}), \dots, f_{i_{n} j}(a_{n}))
\]
where $a_{1} \in A_{i_{1}}, \dots, a_{1} \in A_{i_{n}}$ and $j = i_{1} \lor \dots \lor i_{n}$.\


A simple example can be helpful to clarify the above definition. 

\begin{example}
Let $\A_i$ and $\A_j$ be isomorphic copies of the 4"-element Boolean algebra, with elements labelled as follows:
\[
\A_i = \begin{tikzcd}[row sep = tiny, arrows = {dash}]
 & 1_i & \\ a \arrow[ur, dash] & & a' \arrow[ul] \\ & 0_{i}\arrow[ul]\arrow[ur] &
  \end{tikzcd}
  \qquad  \qquad
  \A_j = \begin{tikzcd}[row sep = tiny, arrows = {dash}]
 & 1_j & \\ b \arrow[ur, dash] & & b' \arrow[ul] \\ & 0_j\arrow[ul]\arrow[ur] &
  \end{tikzcd}
\]
Let $\I$ be the linear order with two elements $i < j $, and $\mathbb{A}$ the semilattice direct system over $\I$ in which the homomorphism $p_{ij}\colon\A_{j }\to\A_j$ is given by $p_{ij}(a) = 1_j$ (and therefore $p_{ij}(a') = 0_j$). Hence the P\l onka sum $\PLA$ over this system is drawn in the following diagram (the arrow indicates the homomorphism $p_{ij}$):
\vspace{15pt}
\vspace{-\baselineskip}
\[
\begin{tikzcd}[row sep = tiny, arrows = {dash}]
                &           &                                                                                    &               & 1_j &    \\
                &           & 1_i & b\arrow[ur] &  & b'\arrow[ul] \\
 & &        &               &                                                  0_j\arrow[ul]\arrow[ur] &    \\
                & a\arrow[uur] &  & a'\arrow[uul]\arrow[ur,->] &                                                         &    \\
                &           &   & &  &  \\
                &           &   & &  &  \\
                &           &   & &  &  \\
&  & 0_i\arrow[uuuur]\arrow[uuuul] &  &  &   \\ 
  \end{tikzcd}\]
 
 \noindent

We briefly sketch the way binary operations work in $\PLA$. For instance, 
\[
a\wedge^{\PL}a' = a\wedge^{\A_i}a' = 0_i
\]

More precisely, any operation involving two elements belonging to the same algebra is performed via the operations in such an algebra. On the other hand, 
 \[
 a'\wedge^{\PL}b = p_{ij}(a')\wedge^{\A_j}b = 0_j\wedge^{\A_j} b = 0_{j}.
 \]
 \qed
\end{example}

%

The theory of P\l onka sums is strictly related with a special kind of operation:

\begin{definition}\label{def: partition function}
Let $\A$ be an algebra of type $\nu$. A function $\cdot\colon A^2\to A$ is a \emph{partition function} in $\A$ if the following conditions are satisfied for all $a,b,c\in A$, $ a_1 , ..., a_n\in A^{n} $ and for any operation $g\in\nu$ of arity $n\geqslant 1$.
\begin{enumerate}
\item $a\cdot a = a$
\item $a\cdot (b\cdot c) = (a\cdot b) \cdot c $
\item $a\cdot (b\cdot c) = a\cdot (c\cdot b)$
\item $g(a_1,\dots,a_n)\cdot b = g(a_1\cdot b,\dots, a_n\cdot b)$
\item $b\cdot g(a_1,\dots,a_n) = b\cdot a_{1}\cdot_{\dots}\cdot a_n $
\end{enumerate}
\end{definition}

The next result makes explicit the relation between P\l onka sums and partition functions:

\begin{theorem}[\hbox{\cite{Plo67}}, Theorem II]\label{th: Teorema di Plonka}
Let $\A$ be an algebra of type $\nu$ with a partition funtion $\cdot$. The following conditions hold:
\begin{enumerate}
\item $A$ can be partitioned into $\{ A_{i} : i \in I \}$ where any two elements $a, b \in A$ belong to the same component $A_{i}$ exactly when
\[
a= a\cdot b \text{ and }b = b\cdot a.
\]
Moreover, every $A_{i}$ is the universe of a subalgebra $\A_{i}$ of $\A$.
\item The relation $\leq$ on $I$ given by the rule
\[
i \leq j \Longleftrightarrow \text{ there exist }a \in A_{i}, b \in A_{j} \text{ s.t. } b\cdot a =b
\]
is a partial order and $\langle I, \leq \rangle$ is a semilattice. 
\item For all $i,j\in I$ such that $i\leq j$ and $b \in A_{j}$, the map $f_{ij} \colon A_{i}\to A_{j}$, defined by the rule $f_{ij}(x)= x\cdot b$ is a homomorphism. The definition of $f_{ij}$ is independent from the choice of $b$, since $a\cdot b = a\cdot c$, for all $a\in A_i$ and $c\in A_j$.
\item $Y = \langle \langle I, \leq \rangle, \{ \A_{i} \}_{i \in I}, \{ f_{ij} \! : \! i \leq j \}\rangle$ is a direct system of algebras such that $\PL(Y)=\A$.
\end{enumerate}
\end{theorem}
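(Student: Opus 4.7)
The plan is to dispatch the four items in sequence, invoking each axiom of the partition function precisely where it is needed.

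For (1), I would define $a \sim b$ iff $a \cdot b = a$ and $b \cdot a = b$. Reflexivity is axiom (1), symmetry is built in, and transitivity follows from associativity, as in $a \cdot c = (a \cdot b) \cdot c = a \cdot (b \cdot c) = a \cdot b = a$. To see that each block $A_i$ is the universe of a subalgebra, I would verify $g(a_1, \ldots, a_n) \sim a_1$ whenever $a_1, \ldots, a_n \in A_i$: axiom (4) gives $g(a_1, \ldots, a_n) \cdot a_1 = g(a_1 \cdot a_1, \ldots, a_n \cdot a_1) = g(a_1, \ldots, a_n)$, and axiom (5) combined with (1) gives $a_1 \cdot g(a_1, \ldots, a_n) = a_1 \cdot a_1 \cdots a_n = a_1$.

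For (2), reflexivity is immediate, and transitivity proceeds by a chain of the form $c = c \cdot b' = c \cdot (b' \cdot a) = (c \cdot b') \cdot a = c \cdot a$, after first deriving $b' \cdot a = b'$ by an analogous manipulation. Antisymmetry is subtler: starting from witnesses in both directions, I would first deduce $a \cdot b' = a$, then apply axiom (3) to get $a \cdot b = a \cdot (b \cdot b') = a \cdot (b' \cdot b) = a \cdot b' = a$, concluding $a \sim b$ and hence $i = j$. The semilattice structure then emerges by showing that, for $a \in A_i$ and $b \in A_j$, the class of $a \cdot b$ serves as $i \vee j$: axioms (1)--(3) place this class above both $i$ and $j$, and minimality is a short calculation against any common upper bound.

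For (3), independence of $f_{ij}$ from the choice of $b \in A_j$ is the key use of axiom (3): for $b, c \in A_j$, $x \cdot b = x \cdot (b \cdot c) = x \cdot (c \cdot b) = x \cdot c$. The homomorphism clause is axiom (4) read directly, while $f_{ii} = \operatorname{id}$ and $f_{ik} = f_{jk} \circ f_{ij}$ follow from (1) and associativity together with one more appeal to independence. Finally, for (4), the universe of $\PL(Y)$ is $\bigcup_i A_i = A$, and axiom (4) computes
\[
g^{\A_j}(f_{i_1 j}(a_1), \ldots, f_{i_n j}(a_n)) = g(a_1 \cdot b, \ldots, a_n \cdot b) = g(a_1, \ldots, a_n) \cdot b
\]
for any $b \in A_j$, where $j = i_1 \vee \cdots \vee i_n$. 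Matching this with $g^{\A}(a_1, \ldots, a_n)$ then reduces to showing $g(a_1, \ldots, a_n) \in A_j$, which is the main technical obstacle. I expect it to fall to a slick double use of axiom (5): writing $c = g(a_1, \ldots, a_n)$ and $b^{\ast} = a_1 \cdots a_n$ (which lies in $A_j$ by the description of joins in (2)), axiom (5) with the choice $b = c$ yields $c = c \cdot c = c \cdot a_1 \cdots a_n = c \cdot b^{\ast}$ (using (1) and (2)), while axiom (5) with $b = b^{\ast}$, combined with the identity $b^{\ast} \cdot a_k = b^{\ast}$, yields $b^{\ast} \cdot c = b^{\ast}$; hence $c \sim b^{\ast}$ and $c \in A_j$, as required.
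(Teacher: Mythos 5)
Your proposal is correct: every step checks out against the five axioms of a partition function (in particular, the key lemma that $x\cdot b$ depends only on the $\sim$-class of $b$, via axiom (3), and the two-sided use of axiom (5) to locate $g(a_1,\dots,a_n)$ in the fibre of $a_1\cdots a_n$). The paper itself gives no proof of this statement --- it is imported verbatim as Theorem II of P\l onka's 1967 paper --- and your reconstruction is essentially the classical argument found there.
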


The above result states that every algebra possessing a partition function can be associated to a semilattice system $\mathbb{A}$ and, most importantly,
the P\l onka sum over $\mathbb{A}$ is a representation of $\A$. 

The construction of P\l onka sums preserves the validity of the so-called \textit{regular identities} (see \cite[Theorem III]{Plo67}), i.e. identities of the form $ \varphi \approx \psi $ such that $\Var(\varphi) = \Var(\psi)$. In particular: 
\begin{theorem}[\mbox{\cite{Plo67}}, Theorem I]\label{lem:Plonka:sums:vs:regular:equations}
If $\mathbb{A}$ is a semilattice direct system of algebras containing at least two algebras, then in the algebra $\PLA$ all regular equations satisfied in all algebras of $\mathbb{A}$ are satisfied, whereas every other equations is false in $\PLA$.
\end{theorem}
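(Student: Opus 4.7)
The plan is to prove both directions through a single structural lemma about term evaluation in $\PLA$.

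For the forward direction, I would first establish by induction on the complexity of a term $t$ the following evaluation formula: if $\Var(t) = \{x_1, \ldots, x_n\}$ and $a_k \in A_{i_k}$ for $k = 1, \ldots, n$, then
\[
t^{\PLA}(a_1, \ldots, a_n) = t^{\A_j}\bigl(f_{i_1 j}(a_1), \ldots, f_{i_n j}(a_n)\bigr),
\]
where $j = i_1 \vee \cdots \vee i_n$. The base case is immediate. For the inductive step $t = g(t_1, \ldots, t_m)$, each subterm $t_k$ has variable set $V_k \subseteq \Var(t)$, giving an index $j_k = \bigvee \{ i_l : x_l \in V_k \}$, and by inductive hypothesis $t_k^{\PLA}$ coincides with the evaluation of $t_k$ in $\A_{j_k}$. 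Combining the defining formula of $g^{\PLA}$ with the fact that each $f_{ij}$ is a homomorphism and with the cocycle identity $f_{ik} = f_{jk} \circ f_{ij}$, one collapses the composite transport maps and reduces to an evaluation in $\A_J$ with $J = j_1 \vee \cdots \vee j_m$; since $\bigcup_k V_k = \Var(t)$, this $J$ coincides with $j$. With this lemma in hand, any regular identity $\varphi \approx \psi$ satisfied in every $\A_i$ immediately holds in $\PLA$: both sides, under any assignment, reduce to term evaluations in the same $\A_j$ on the same tuple, and these agree by hypothesis.

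For the reverse direction, I would split into two cases. If the equation in question is regular but fails in some $\A_i$, it also fails in $\PLA$, because $\A_i$ is a subalgebra of $\PLA$ (operations applied to elements of a single $A_i$ stay in $A_i$, since $f_{ii}$ is the identity). If instead $\varphi \approx \psi$ is non-regular, I would pick $x \in \Var(\varphi) \setminus \Var(\psi)$ (or the symmetric option) and exploit the assumption that $\mathbb{A}$ contains at least two algebras: by antisymmetry of $\leq$, any two distinct $i_1, i_2 \in I$ satisfy $i_2 \not\leq i_1$ after possibly swapping, so pick such a pair. Assign $x$ to an element of $A_{i_2}$ and every other variable appearing in $\varphi$ or $\psi$ to elements of $A_{i_1}$. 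By the evaluation formula, $\varphi^{\PLA}$ then lies in the component $A_{i_1 \vee i_2}$ while $\psi^{\PLA}$ lies in $A_{i_1}$; since $i_1 \vee i_2 \neq i_1$, these components are disjoint in the P\l onka sum and the two evaluations differ.

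The main obstacle is the bookkeeping in the evaluation formula, specifically tracking how the cocycle condition and the homomorphism property of the $f_{ij}$ compose when subterms involve proper subsets of the variables of the outer term; the observation that the join of subterm indices equals the global join index is what makes the induction close. Nullary operations would require the separate convention mentioned in the footnote ($I$ must then have a bottom element), but they do not alter the structure of the argument.
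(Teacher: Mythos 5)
The paper does not prove this statement: it is quoted verbatim as Theorem I of P\l onka's 1967 paper, so there is no internal proof to compare against. Your argument is correct and is essentially the standard (original) one — the evaluation lemma reducing $t^{\PLA}$ to an evaluation in $\A_{j}$ with $j$ the join of the component indices of the arguments, plus the two-case analysis for failure (each $\A_i$ being a subalgebra of $\PLA$ handles regular identities failing somewhere, and the two-index assignment separates the components for non-regular identities); the only nitpick is that when $\Var(\varphi)=\{x\}$ the value of $\varphi$ lands in $A_{i_2}$ rather than $A_{i_1\vee i_2}$, but it is still in a component distinct from $A_{i_1}$, so the conclusion stands.
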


A variety of algebras is called \emph{regular} if it does satisfies regular identities only. It is called irregular if it is not regular. In particular, an irregular variety $\V$ which possesses a term-definable operation $f(x,y)$ such that $\V\models f(x,y)\approx x$ is said to be \emph{strongly irregular}. Strongly irregular varieties are actually very common in mathematics: indeed, examples include the variety of groups and rings (as witnessed by the terms $f(x,y)\coloneqq x + (y - y)$, in additive notation) and any variety which has a lattice reduct (this includes, for instance, any variety of residuated lattices \cite{GaJiKoOn07}), as witnessed by the term $f(x,y)\coloneqq x\wedge (x\vee y)$.

Whenever $\V$ is an irregular variety, then we indicate by $R(\V)$, the \emph{regularization} of $\V$, namely the variety satisfying only the regular identities holding in $\V$.

The importance of regular and strongly irregular varieties, in the context of P\l onka sums, is resumed in the following:

\begin{theorem}[\hbox{\cite{Romanowska92}}, Theorem 7.1]\label{th: rappresentazione e strongly irregular varieties}
Let $\V$ be a strongly irregular variety. Then any element $\A\in R(\V) $ is isomorphic to the P\l onka sum over a direct system of algebras in $\V$. 
\end{theorem}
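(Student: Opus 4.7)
The plan is to produce a partition function on $\A$ from the strongly irregular witness term of $\V$, invoke Theorem \ref{th: Teorema di Plonka} to get a direct system whose P\l onka sum is $\A$, and then check that each fibre of this system actually lies in $\V$.

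\medskip

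\noindent\textbf{Step 1: manufacturing the partition function.} Let $f(x,y)$ be a binary term such that $\V\models f(x,y)\eq x$ (such a term exists by strong irregularity; if it is not already binary, collapse the extra variables by substituting, say, $x$). On $\A\in R(\V)$ define
\[
a\cdot b \coloneqq f^{\A}(a,b), \qquad a,b\in A.
\]
To verify the five axioms of Definition \ref{def: partition function}, I rewrite each of them as an equation of terms over the type of $\V$: for instance axiom (1) becomes $f(x,x)\eq x$, axiom (4) becomes $f(g(x_1,\dots,x_n),y)\eq g(f(x_1,y),\dots,f(x_n,y))$, and axiom (5) becomes $f(y,g(x_1,\dots,x_n))\eq f(\dots f(f(y,x_1),x_2)\dots,x_n)$. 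In each case the two sides share exactly the same variables, so the equation is regular; and in $\V$ both sides reduce to the first argument by repeated application of $f(u,v)\eq u$, so the equation is valid in $\V$. Hence each axiom is a regular identity of $\V$, and therefore holds in every algebra of $R(\V)$, in particular in $\A$.

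\medskip

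\noindent\textbf{Step 2: representing $\A$ as a P\l onka sum.} Theorem \ref{th: Teorema di Plonka} applies directly: $\cdot$ induces a partition of $A$ into the universes of subalgebras $\{\A_i:i\in I\}$, a semilattice order on $I$, transition homomorphisms $f_{ij}$ for $i\leqs j$, and a direct system $Y=\langle\langle I,\leqs\rangle,\{\A_i\}_{i\in I},\{f_{ij}\}\rangle$ such that $\PL(Y)=\A$.

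\medskip

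\noindent\textbf{Step 3: each fibre lies in $\V$.} This is the delicate point, because Theorem \ref{th: Teorema di Plonka} only guarantees that the fibres are subalgebras of $\A$, i.e.\ members of $R(\V)$, not of $\V$. The key observation is the classical fact (due to P\l onka) that a strongly irregular variety is axiomatised by the regular identities of $\V$ together with the single strongly irregular identity $f(x,y)\eq x$; equivalently, $\V=R(\V)\cap\mathrm{Mod}(f(x,y)\eq x)$. Now, by definition of the partition induced by $\cdot$, any two elements $a,b$ lying in the same component $A_i$ satisfy $a\cdot b=a$, that is $f^{\A_i}(a,b)=a$. Hence $\A_i\models f(x,y)\eq x$; since $\A_i\leqs\A\in R(\V)$ it also satisfies every regular identity of $\V$, and so $\A_i\in\V$.

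\medskip

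\noindent\textbf{Expected main obstacle.} Step 1 and Step 2 are essentially bookkeeping on top of Theorem \ref{th: Teorema di Plonka}. The real work is Step 3, which rests on P\l onka's characterisation of strongly irregular varieties as $R(\V)+\{f(x,y)\eq x\}$; without this, one only gets fibres in $R(\V)$ rather than in $\V$. Once that is granted, the isomorphism $\A\cong\PL(\A_i)_{i\in I}$ is immediate from part (4) of Theorem \ref{th: Teorema di Plonka}.
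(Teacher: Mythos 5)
The paper itself offers no proof of this statement: it is imported verbatim from P\l onka and Romanowska (Theorem 7.1 of the cited reference), so there is no in-text argument to compare yours against. Your reconstruction is the standard one and is correct. Steps 1 and 2 are exactly the intended use of Definition \ref{def: partition function} and Theorem \ref{th: Teorema di Plonka} (all five partition-function axioms, read as term identities in $f$, are regular and reduce to trivialities under $f(x,y)\eq x$, so they pass from $\V$ to $R(\V)$), and you correctly isolate Step 3 as the only point with real content. The one ingredient you invoke without proof is the ``classical fact'' that $\V=R(\V)\cap\mathrm{Mod}(f(x,y)\eq x)$; since the whole theorem rests on it, it is worth recording the short padding argument: given any identity $\varphi\eq\psi$ of $\V$ with $\Var(\varphi)\cup\Var(\psi)=\{x_1,\dots,x_m\}$, set $\varphi'\coloneqq f(\cdots f(\varphi,x_1)\cdots,x_m)$ and likewise $\psi'$; then $\varphi'\eq\psi'$ is regular and valid in $\V$, hence valid in every $\B\in R(\V)$, and in any such $\B$ additionally satisfying $f(x,y)\eq x$ the primed terms collapse back to $\varphi$ and $\psi$, giving $\B\models\varphi\eq\psi$ and so $\B\in\V$. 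With that inserted your Step 3 closes, each fibre $\A_i$ lies in $\V$, and part (4) of Theorem \ref{th: Teorema di Plonka} yields $\PL(Y)=\A$. Two minor caveats: the conclusion you get is an equality rather than merely an isomorphism, which is harmless; and, as the paper's own footnote signals, the argument tacitly assumes the type has no nullary operations (otherwise the partition-function axioms and the index semilattice require extra care).
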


\section{The categories of semilattice systems}\label{sec: dir,inv systems}

The present section is meant to introduce the categories of direct and inverse semilattice systems which will be used to establish the main results (see Section \ref{sec: dualita}). 

We briefly recall the categories so as they are introduced in our previous work \cite{nostraduality}. Semilattice direct (and inverse) systems are, roughly speaking, obvious generalizations of direct (and inverse) systems in a given category, obtained by assuming the index set to be a semilattice instead of a (directed) pre-ordered set. These concepts find applications in several fields of mathematics (see for example \cite{shapebook}).

\begin{definition}\label{def: direct system}
 Let $ \mathfrak{C} $ be an arbitrary category. A \emph{semilattice direct system} in $ \mathfrak{C} $ is a triple $\mathbb{X}= \pair{X_{i}, p_{ii'}, I} $ such that 
 \begin{itemize}
\item[(i)] $ I $ is a join semilattice. 
\item[(ii)] $ X_i $ is an object in $ \mathfrak{C}$, for each $ i\in I $;
\item[(iii)] $p_{ii'} : X_{i} \to X_{i'}$ is a morphism of $ \mathfrak{C} $, for each pair $i\leqs i'$, satisfying that $p_{ii} $ is the identity in $ X_i $ and such that $ i\leq i'\leq i'' $ implies $ p_{i'i''} \circ p_{ii'}  = p_{ii''}$. 
\end{itemize}
\end{definition}


As a matter of convention, we indicate by $ \vee $ the semilattice operation on $ I $. 

Given two strongly direct systems $ \mathbb{X} $ and $ \mathbb{Y} $, a morphism is a pair $(\varphi, f_i):\mathbb{X}\to\mathbb{Y}$ such that:
\begin{itemize}
\item[i)] $ \varphi\colon I\rightarrow J $ is a semilattice homomorphism 
\item[ii)] $ f_i\colon X_{i}\rightarrow Y_{\varphi(i)} $ is a morphism of $ \mathfrak{C} $, making the diagram in Figure \ref{fig: diagramma strongly dir systems} commutative for each $ i, i'\in I $, $ i\leq i' $:
\end{itemize}
\begin{center}
\begin{figure}[h]
\begin{tikzpicture}
\draw (-4,0) node {$ Y_{\varphi(i)} $};
	\draw (4,0) node {$ Y_{\varphi(i')} $};
	
	\draw [line width=0.8pt, ->] (-3.5,0) -- (3.4,0);
	\draw (0,-0.4) node {\begin{footnotesize}$q_{\varphi(i)\varphi(i')}$\end{footnotesize}};
	
	\draw [line width=0.8pt, <-] (3.3,3) -- (-3.3,3);
	\draw (0, 3.3) node {\begin{footnotesize}$p_{ii'}$\end{footnotesize}};

	\draw (-4,3) node {$X_i$}; 
	
	\draw (4,3) node {$X_{i'}$};
		
	\draw [line width=0.8pt, ->] (-4,2.6) -- (-4,0.4);
	\draw (-4.6,1.7) node {\begin{footnotesize}$f_i$\end{footnotesize}};
	\draw (4.6,1.7) node {\begin{footnotesize}$f_{i'}$\end{footnotesize}};
	\draw [line width=0.8pt, <-] (4,0.4) -- (4,2.6);
	
\end{tikzpicture}
\caption{The commuting diagram defining morphisms of semilattice direct systems}\label{fig: diagramma strongly dir systems}
\end{figure}
\end{center}

Semilattice inverse systems, for an arbitrary category, are defined in an analogous, dual way. 
 
\begin{definition}\label{def: strong inverse system}
Let $ \mathfrak{C} $ be an arbitrary category, a \emph{semilattice inverse system} in the category $ \mathfrak{C} $ is a tern $\mathcal{X}= \pair{X_{i}, p_{ii'}, I} $ such that
\vspace{3pt} 
\begin{itemize}
\item[(i)] $ I $ is a join semilattice; 
\item[(ii)] for each $ i\in I $, $ X_i $ is an object in $ \mathfrak{C}$;
\item[(iii)] $p_{ii'} : X_{i'} \to X_i$ is a morphism of $ \mathfrak{C} $, for each pair $i\leqs i'$, satisfying that $p_{ii} $ is the identity in $ X_i $ and such that $ i\leq i'\leq i'' $ implies $ p_{ii'} \circ p_{i'i''}  = p_{ii''}$. 
\end{itemize}
\end{definition}


As already mentioned, the only difference making an inverse system a semilattice inverse system is the requirement on the index set to be a semilattice instead of a directed preorder. 

\begin{definition}\label{def: morfismi di strong inv}
Given two \emph{semilattice inverse systems} $ \mathcal{X}=\pair{X_{i}, p_{ii'}, I} $ and $ \mathcal{Y}=\pair{Y_{j}, q_{jj'}, J} $, a \emph{morphism} between $ \mathcal{X} $ and $\mathcal{Y} $ is a pair $  (\varphi, f_j) $ such that
\vspace{3pt}
\begin{itemize}
\item[i)]  $ \varphi :J\rightarrow I $ is a semilattice homomorphism;
\item[ii)] for each $ j\in J $, $ f_j: X_{\varphi(j)}\rightarrow Y_{j} $ is a morphism in $ \mathfrak{C} $, such that whenever $ j\leq j' $, then the diagram in Figure \ref{fig: diagramma strongly inv systems} commutes. 
\begin{center}
\begin{figure}[h]
\begin{tikzpicture}
\draw (-4,0) node {$ Y_{j} $};
	\draw (4,0) node {$ Y_{j'} $};
	
	\draw [line width=0.8pt, <-] (-3.6,0) -- (3.6,0);
	\draw (0,-0.4) node {\begin{footnotesize}$q_{jj'}$\end{footnotesize}};
	
	\draw [line width=0.8pt, ->] (3.3,3) -- (-3.3,3);
	\draw (0, 3.3) node {\begin{footnotesize}$p_{\varphi(j)\varphi(j')}$\end{footnotesize}};

	\draw (-4,3) node {$X_{\varphi(j)}$}; 
	
	\draw (4,3) node {$X_{\varphi(j')}$};
		
	\draw [line width=0.8pt, ->] (-4,2.6) -- (-4,0.4);
	\draw (-4.6,1.7) node {\begin{footnotesize}$f_j$\end{footnotesize}};
	\draw (4.6,1.7) node {\begin{footnotesize}$f_{j'}$\end{footnotesize}};
	\draw [line width=0.8pt, <-] (4,0.4) -- (4,2.6);
	
\end{tikzpicture}
\caption{The commuting diagram defining morphisms of semilattice inverse systems.}\label{fig: diagramma strongly inv systems}
\end{figure}
\end{center}
\end{itemize}
\end{definition}
Notice that, 
the assumption that $ \varphi\colon J\rightarrow I $ is a (semilattice) homomorphism implies that whenever $ j\leq j' $ then $ \varphi(j)\leq \varphi(j') $.

It is easily checked that, whenever $\mathfrak{C} $ an arbitrary category, then semilattice direct and inverse systems form two categories which we will refer to sem-dir-$\mathfrak{C}$ and sem-inv-$\mathfrak{C}$, respectively.


Dualities of arbitrary categories can be lifted to dualities of semilattice systems constructed via such duals categories. This result will be used in the next section:

\begin{theorem}[\mbox{\cite{nostraduality}}]\label{teorema: duality strong-dir strong-inv}
Let $\mathfrak{C}$ and $\mathfrak{D}$ be dually equivalent categories. Then \emph{sem-dir-}$\mathfrak{C}$ and \emph{sem-inv-}$\mathfrak{D}$ are dually equivalent categories.
\end{theorem}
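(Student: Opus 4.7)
The plan is to lift the dual equivalence pointwise, keeping the index semilattices untouched. Fix contravariant functors $F\colon\mathfrak{C}\to\mathfrak{D}$ and $G\colon\mathfrak{D}\to\mathfrak{C}$ witnessing the duality, together with natural isomorphisms $\eta\colon\mathrm{Id}_\mathfrak{C}\Rightarrow GF$ and $\epsilon\colon\mathrm{Id}_\mathfrak{D}\Rightarrow FG$. First I would define a functor $\Phi\colon\text{sem-dir-}\mathfrak{C}\to(\text{sem-inv-}\mathfrak{D})^{op}$ by sending $\mathbb{X}=\pair{X_i,p_{ii'},I}$ to $\pair{F(X_i),F(p_{ii'}),I}$: the contravariance of $F$ flips $F(p_{ii'})$ to the direction $F(X_{i'})\to F(X_i)$ required by Definition~\ref{def: strong inverse system}, and turns the identity $p_{i'i''}\circ p_{ii'}=p_{ii''}$ into $F(p_{ii'})\circ F(p_{i'i''})=F(p_{ii''})$, exactly the cocycle condition~(iii) there. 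A functor $\Psi\colon\text{sem-inv-}\mathfrak{D}\to(\text{sem-dir-}\mathfrak{C})^{op}$ is defined symmetrically using $G$.

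The key observation driving the morphism side is that Definitions~\ref{def: direct system} and~\ref{def: morfismi di strong inv} adopt opposite conventions on the index map: a morphism in sem-dir-$\mathfrak{C}$ from $\mathbb{X}$ to $\mathbb{Y}$ carries $\varphi\colon I\to J$, while a morphism in sem-inv-$\mathfrak{D}$ from $\Phi(\mathbb{Y})$ to $\Phi(\mathbb{X})$ likewise requires a semilattice homomorphism $I\to J$. Thus given $(\varphi,f_i)\colon\mathbb{X}\to\mathbb{Y}$ in sem-dir-$\mathfrak{C}$ I would reuse $\varphi$ unchanged and set $\Phi(\varphi,f_i)=(\varphi,F(f_i))$. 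The square of Figure~\ref{fig: diagramma strongly dir systems} for $(\varphi,f_i)$, namely $f_{i'}\circ p_{ii'}=q_{\varphi(i)\varphi(i')}\circ f_i$, becomes under $F$ exactly the square of Figure~\ref{fig: diagramma strongly inv systems} for $(\varphi,F(f_i))$, so $\Phi(\varphi,f_i)$ is indeed a morphism of sem-inv-$\mathfrak{D}$. Functoriality of $\Phi$ then reduces to functoriality of $F$ plus the routine check that composition of index homomorphisms is preserved, and the same argument gives functoriality of $\Psi$.

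Finally, to establish the duality I would build the unit from the original $\eta$: given $\mathbb{X}=\pair{X_i,p_{ii'},I}$, the system $\Psi\Phi(\mathbb{X})$ equals $\pair{GF(X_i),GF(p_{ii'}),I}$, and the pair $(\mathrm{id}_I,\eta_{X_i})$ is a morphism $\mathbb{X}\to\Psi\Phi(\mathbb{X})$ in sem-dir-$\mathfrak{C}$ because the naturality square $GF(p_{ii'})\circ\eta_{X_i}=\eta_{X_{i'}}\circ p_{ii'}$ is precisely the commutativity required by Figure~\ref{fig: diagramma strongly dir systems}. Since each $\eta_{X_i}$ is an isomorphism in $\mathfrak{C}$, the pair is invertible in sem-dir-$\mathfrak{C}$; naturality in $\mathbb{X}$ follows by pasting the naturality squares of $\eta$ componentwise. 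The counit for $\Phi\Psi$ is produced analogously from $\epsilon$. The main obstacle I anticipate is purely directional bookkeeping: matching covariant against contravariant arrows and confirming that the opposing index conventions in Definitions~\ref{def: direct system} and~\ref{def: morfismi di strong inv} align perfectly under $F$ and $G$ so that no reversal is ever needed on the semilattice side. Once this is untangled, every remaining verification reduces to a componentwise application of the original duality.
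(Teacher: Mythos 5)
Your construction is correct: the paper itself states this theorem without proof (it is imported from \cite{nostraduality}), and the componentwise lifting you describe --- applying the contravariant functors $F$ and $G$ to objects and bonding maps, reusing the index semilattices and index homomorphisms verbatim, and transporting the unit and counit of the underlying duality componentwise --- is precisely the intended argument. The one place the bookkeeping could fail, namely that the index map of an inverse-system morphism in Definition \ref{def: morfismi di strong inv} runs from the target's index set to the source's, which exactly absorbs the contravariance of $F$ so that $\varphi$ is never reversed, is handled correctly in your proposal.
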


A similar idea of lifting a duality has been ideated by Romanowska and Smith \cite{Romanowska96,Romanowska97}. 
In contrast with their approach, our duality is obtained constructing the categories sem-dir-$\mathcal{C}$ and sem-inv-$\mathcal{D}$ using the very same index set. On the other hand, they consider, on the algebraic side, the semilattice sum of an algebraic category and, on the topological, the semilattice representation of the dual spaces: the duality is then obtained by \emph{dualising} the semilattice of the index sets (the proof involves sophisticated categorical machinery).

\section{The Duality}\label{sec: dualita}

The notions of strongly irregular variety and regularization of a variety can be clearly defined as categories. We will say that $\mathfrak{C}$ is a strongly irregular algebraic category provided that its objects are strongly irregular varieties. In such case, $R(\mathfrak{C})$ is the algebraic category whose objects are regularizations of the objects in $\mathfrak{C}$. Moreover, we say that an algebraic category $\mathfrak{C}$ is \emph{dualisable} whenever it admits a dually equivalent topological category.

Theorem \ref{th: rappresentazione e strongly irregular varieties} states that, whenever $\mathfrak{C}$ is a strongly irregular category, the objects in  $R(\mathfrak{C})$ are isomorphic to the objects of the category sem-dir-$\mathfrak{C}$. We will show that they are also equivalent as categories. 

\begin{definition}\label{def: omomorfismo che preserva le fibre}
Given two semilattice direct systems $\mathbb{A} =\pair{\A_i, p_{ii'}, I}$ and $\mathbb{B}= \pair{\B_j, q_{jj'}, J}$ from an arbitrary category $\mathfrak{C}$ and an homomorphism $h\colon\PLA\to\PLB$, we say that $h$ \emph{preserves} the P\l onka fibres if, for every $i\in I$ there exists an index $j\in J$ such that $h(A_i)\subseteq B_j$. 
\end{definition}

We are interested in the following question: for which classes $\mathcal{K}$ of algebras, any homomorphism (between P\l onka sums of elements in $\mathcal{K}$) preserves the fibres? 

For the purpose of this paper, we confine our analysis to the case where $\mathcal{K}$ is a strongly irregular variety.

\begin{theorem}\label{th: omomorfismi tra somme di Plonka}
Let $ \mathbb{A} =\pair{\A_{i}, p_{ii'}, I} $ and $ \mathbb{B}=\pair{\B_{j}, q_{jj'},J} $ be semilattice direct systems of algebras, with $\{\A_i\}_{i\in I} $ and $\{\B_j\}_{j\in J} $ belonging to a variety $\V$, for each $i\in I$, $j\in J$ . Then any homomorphism $h:\PLA\to\PLB$ preserves the fibres if and only if $\V$ is a strongly irregular variety.
\end{theorem}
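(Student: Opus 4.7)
My plan is to prove the two implications separately. The $(\Leftarrow)$ direction is the computational heart of the argument and hinges on the fact that, in a strongly irregular variety, fibre-membership inside a P\l onka sum is captured by a specific term operation; the $(\Rightarrow)$ direction is proved by contraposition via an explicit construction of a fibre-violating homomorphism.

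For $(\Leftarrow)$, suppose $\V \models f(x, y) \approx x$ for some term $f$ in which both variables really occur. The first step is to unwind the P\l onka sum definition: for any $a \in A_i$ and $b \in A_{i'}$,
\[
f^{\PLA}(a, b) \;=\; f^{\A_{i \vee i'}}\bigl(p_{i, i \vee i'}(a),\, p_{i', i \vee i'}(b)\bigr) \;=\; p_{i, i \vee i'}(a),
\]
the last equality holding because $\A_{i \vee i'} \in \V$. Specialising to $a, a' \in A_i$ gives $f^{\PLA}(a, a') = a$ and, symmetrically, $f^{\PLA}(a', a) = a'$, which is exactly the characterisation of the P\l onka fibres provided by Theorem~\ref{th: Teorema di Plonka}(1). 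The second step is then immediate: given a homomorphism $h\colon \PLA \to \PLB$ and $a, a' \in A_i$, applying $h$ to both equalities yields $f^{\PLB}(h(a), h(a')) = h(a)$ and $f^{\PLB}(h(a'), h(a)) = h(a')$; the same unwinding performed inside $\PLB$ with $h(a) \in B_j$ and $h(a') \in B_{j'}$ forces $j \vee j' = j = j'$, so $h(a)$ and $h(a')$ share a fibre.

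For $(\Rightarrow)$, I argue contrapositively: assume $\V$ is not strongly irregular and exhibit a homomorphism that does not preserve fibres. Take $\A := F_\V(x, y)$, the free $\V$-algebra on two generators, viewed as a trivial one-fibre P\l onka sum; let $\mathbb{B}$ be a P\l onka sum whose indexing semilattice contains two elements $j_1, j_2$ (plus, if they are incomparable, the join $j_1 \vee j_2$) and whose fibres $\B_{j_1}, \B_{j_2}$ are copies of $\A$, with connecting homomorphisms still to be chosen. Define $h\colon \A \to \PLB$ on generators by sending $x$ into $\B_{j_1}$ and $y$ into $\B_{j_2}$, so that the image of the unique fibre of $\PLA$ already meets two distinct fibres of $\PLB$. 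By Theorem~\ref{lem:Plonka:sums:vs:regular:equations} the regular identities of $\V$ are satisfied in $\PLB$ and hence pose no obstruction to extending $h$ to a homomorphism; the failure of strong irregularity is exactly what prevents any non-regular identity from forcing the images of $x$ and $y$ back into a common fibre.

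The principal obstacle is this final well-definedness check when $\V$ is irregular but not strongly irregular: a non-regular identity of $\V$ could in principle identify a term truly using both $x$ and $y$ with a term using only one of them, landing values of $h$ in an unexpected fibre of $\PLB$. Overcoming this will require a careful choice of $\mathbb{B}$ — both the shape of the indexing semilattice and the connecting homomorphisms — designed so that any such forced identification takes place inside the join fibre $\B_{j_1 \vee j_2}$ without collapsing the separation between $\B_{j_1}$ and $\B_{j_2}$ that is needed to witness the failure of fibre preservation.
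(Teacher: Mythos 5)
Your $(\Leftarrow)$ direction is correct and is essentially the paper's argument: the paper notes that the strongly irregular term $\circ$ is a partition function on $\PLA$ and $\PLB$ and invokes Theorem~\ref{th: Teorema di Plonka}(1) to conclude that $b_1=b_1\circ b_2$ and $b_2=b_2\circ b_1$ force $b_1,b_2$ into the same fibre; your direct unwinding of $f^{\PLB}$ to get $j\vee j'=j=j'$ is the same computation made explicit. (Both versions tacitly use that the P\l onka-sum formula for basic operations extends to term operations in which every variable really occurs; this is standard but worth a word.)

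The $(\Rightarrow)$ direction, however, is not a proof: it is a plan whose decisive step you yourself flag as unresolved. You propose to exhibit a non-fibre-preserving homomorphism out of $F_\V(x,y)$ into a two-fibre system $\mathbb{B}$, but you never specify the indexing semilattice, the fibres, or the connecting morphisms of $\mathbb{B}$, and you never verify that the assignment $x\mapsto B_{j_1}$, $y\mapsto B_{j_2}$ extends to a well-defined homomorphism. This is exactly where the content lies: if $\V$ is irregular but not strongly irregular, it satisfies some non-regular identity $s\approx t$ with $\Var(s)\neq\Var(t)$, and by Theorem~\ref{lem:Plonka:sums:vs:regular:equations} such an identity \emph{fails} in any P\l onka sum with at least two fibres, so $\PLB\notin\V$ and the universal property of $F_\V(x,y)$ gives you no homomorphism into $\PLB$ for free; the appeal to ``regular identities pose no obstruction'' does not address the non-regular ones, which are precisely the problem. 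A further mismatch with the statement: the theorem (and the paper's proof) fixes $\mathbb{A}$, $\mathbb{B}$ and a given fibre-preserving $h$ and derives strong irregularity from that data, whereas your contrapositive would at best produce \emph{some} pair of systems admitting a bad homomorphism, which is a differently quantified claim. The paper's own route is quite different: assuming $h$ preserves fibres and $\V$ is not strongly irregular, it argues that the partition functions of $\PLA$ and $\PLB$ are term-definable, so each fibre $\A_i$, $\B_j$ satisfies $f(x,y)\approx x$, contradicting $\A_i,\B_j\in\V$. You would need either to carry out your construction in full (including the well-definedness check you defer) or to switch to an argument of the paper's type.
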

\begin{proof}
To simplify notation, set $\A=\PLA$ and $\B=\PLB$. 

($\Leftarrow$) Suppose $\mathcal{K}$ is a strongly irregular variety, i.e. it possesses a binary term definable operation $\circ$ such that $\V\models x\circ y \approx x$. Notice that $\circ$ defines a partition function on $\A$ and $\B$. Let $a_1$, $a_2\in A_i$ for some $i\in I$ and suppose, towards a contradiction, that $h(a_1)=b_1\in B_j$ and $h(a_2)= b_2\in B_k$ with $j\neq k\in J$. It follows that: 
\[
b_1 = h(a_1) = h(a_1\circ a_2) = h(a_1)\circ h(a_2) = b_1\circ b_2
\] 
and 
\[
b_2 = h(a_2) = h(a_2\circ a_1) = h(a_2)\circ h(a_1) = b_2\circ b_1,
\] 
which implies that the elements $b_1$, $b_2$ belong to the same algebra in $\mathbb{B}$, i.e. $j=k$, a contradiction.

($\Rightarrow$) Suppose $h$ preserves the fibres of the P\l onka sum, i.e. for each $i\in I$, there exists a $j\in J$ such that $h(A_i)\subseteq B_j$ and suppose, towards a contradiction that $\V$ is not strongly irregular. Since $\A$ and $\B$ are P\l onka sums of algebras in $\V$, there exists a binary term definable operation $f (x,y)$ which is a partition function on both $\A$ and $\B$. By Theorem \ref{th: Teorema di Plonka}, two elements $a,b\in \PLA$ belong to the same component $\A_i$ if and only if $f(a,b)= a$ and $f(b,a)=b$. Therefore, for each $i\in I$, $\A_i\models f(x,y)\approx x$. Moreover, for each $i\in I$, $\B_j\in \mathbb{H}(A_i)$, hence (since $\V$ is a variety) $\B_j\models f(x,y)\approx x$, for each $j\in J$. Then, since $\V$ is not strongly irregular, it follows that $\A_i $, $\B_j\not\in\V$, a contradiction.
\end{proof}


\begin{lemma}\label{lemma: morfismi tra somme di Plonka}
Let $ \mathbb{A} =\pair{\A_{i}, p_{ii'}, I} $ and $ \mathbb{B}=\pair{\B_{j}, q_{jj'},J} $ be semilattice direct systems of an arbitrary algebraic category $\mathfrak{C}$ and $(\varphi, f_{i})$ a morphism from $\mathbb{A}$ to $\mathbb{B}$. Then $h\colon \PLA\to\PLB$, defined as 
\[ 
h(a)\coloneqq f_{i}(a),
\]
where $i\in I$ is the index such that $a\in A_i$, is a morphism in $\mathfrak{C}$.
\end{lemma}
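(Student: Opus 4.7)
The plan is to verify that $h$ preserves every basic operation $g$ of arity $n\geq 1$ by unwinding both sides of the equation $h(g^{\PLA}(a_1,\dots,a_n))=g^{\PLB}(h(a_1),\dots,h(a_n))$ through the defining recipe of the P\l onka sum, and then using (a) that each $f_i$ is itself a morphism in $\mathfrak{C}$, (b) the commuting square of Figure \ref{fig: diagramma strongly dir systems}, and (c) that $\varphi$ is a semilattice homomorphism.

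Concretely, I would fix $a_1\in A_{i_1},\dots,a_n\in A_{i_n}$ and set $k\coloneqq i_1\vee\dots\vee i_n$. By the definition of the P\l onka sum operations, $g^{\PLA}(a_1,\dots,a_n)=g^{\A_k}(p_{i_1 k}(a_1),\dots,p_{i_n k}(a_n))$, an element of $A_k$. Applying $h$ therefore means applying $f_k$, and since $f_k\colon \A_k\to \B_{\varphi(k)}$ is a morphism in $\mathfrak{C}$, one may push $g$ through $f_k$, obtaining
\[
h\bigl(g^{\PLA}(a_1,\dots,a_n)\bigr)=g^{\B_{\varphi(k)}}\bigl(f_k(p_{i_1 k}(a_1)),\dots,f_k(p_{i_n k}(a_n))\bigr).
\]

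Next I would invoke the commutativity of Figure \ref{fig: diagramma strongly dir systems} applied to each pair $i_\ell\leq k$, which gives $f_k\circ p_{i_\ell k}=q_{\varphi(i_\ell)\varphi(k)}\circ f_{i_\ell}$. Substituting transforms the previous display into
\[
g^{\B_{\varphi(k)}}\bigl(q_{\varphi(i_1)\varphi(k)}(f_{i_1}(a_1)),\dots,q_{\varphi(i_n)\varphi(k)}(f_{i_n}(a_n))\bigr).
\]
Expanding the right-hand side $g^{\PLB}(h(a_1),\dots,h(a_n))=g^{\PLB}(f_{i_1}(a_1),\dots,f_{i_n}(a_n))$ by the P\l onka sum recipe yields the same expression, \emph{provided} the relevant join indices agree; this is where condition (i) of Definition \ref{def: direct system} morphisms comes in, because $\varphi$ being a semilattice homomorphism gives $\varphi(i_1)\vee\dots\vee\varphi(i_n)=\varphi(i_1\vee\dots\vee i_n)=\varphi(k)$.

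The argument is entirely routine once the diagrams are unpacked, so I do not expect any serious obstacle; the only subtle point worth flagging is that the matching of the two join indices in the final step genuinely uses that $\varphi$ preserves joins, not merely order. A brief remark should also be made that $h$ is well defined because the universes $A_i$ are pairwise disjoint, so the index $i$ with $a\in A_i$ in the definition $h(a)\coloneqq f_i(a)$ is unique.
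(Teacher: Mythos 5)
Your proposal is correct and follows essentially the same route as the paper's own proof: unwind both sides via the P\l onka sum recipe at the join index $k$, use that $f_k$ is a morphism, apply the commuting square $f_k\circ p_{i_\ell k}=q_{\varphi(i_\ell)\varphi(k)}\circ f_{i_\ell}$, and match the join indices via $\varphi(i_1)\vee\dots\vee\varphi(i_n)=\varphi(k)$. If anything, you are slightly more explicit than the paper in flagging that the last step genuinely needs $\varphi$ to preserve joins and that well-definedness rests on the disjointness of the universes $A_i$.
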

\begin{proof}
The map $h$ is well defined for every $i\in I$, as by assumption $f_i$ is morphism in $\mathfrak{C}$. Since $\mathfrak{C}$ is an algebraic category (where morphisms are homomorphisms of algebras), we only have to check that $h$ is compatible with all the operations of the P\l onka sum. To simplify notation, we set $\A=\PLA$, $\B=\PLB$, $a_1, ..., a_n\in\A$ with $i_1,...,i_n$ indexing the algebras to which they belong, $g$ a generic n-ary operation in the type of the considered algebras and, finally, $k= i_1\vee...\vee i_n$. Then,

\begin{align*}
h(g^{\A}(a_1,...,a_n)) &= h (g^{\A_k}(p_{{i_1}k}(a_1), ... , p_{{i_n}k}(a_n))) \\
&= f_{k}(g^{\A_k}(p_{{i_1}k}(a_1), ... , p_{{i_n}k}(a_n))) \\
&= g^{\B_{\varphi(k)}}(f_{k}(p_{{i_1}k}(a_1)), ... , f_{k}(p_{{i_n}k}(a_n))) \\
&= g^{\B_{\varphi(k)}}(q_{\varphi(i_1)\varphi(k)}(f_{i_1} (a_1)), ... , q_{\varphi(i_n)\varphi(k)}(f_{i_n} (a_n)) \\
&= g^{\B}(f_{i_1}(a_1), ... , f_{i_n}(a_n)) \\
&= g^{\B} (h(a_1), ... , h(a_n)),
\end{align*}
where the fourth equality is justified by the commutativity of the following diagram (which holds as, by assumption, $(\varphi, f_i)$ is morphism in sem-dir-$\mathfrak{C}$), for every $i\in\{ i_1 , ... , i_n\}$:

\begin{center}
\begin{tikzpicture}[scale=1]
\draw (-4,0) node {$ \B_{\varphi(i)} $};
	\draw (4,0) node {$ \B_{\varphi(k)} $};
	
	\draw [line width=0.8pt, ->] (-3.4,0) -- (3.3,0);
	\draw (0,-0.4) node {\begin{footnotesize}$q_{\varphi(i)\varphi(k)}$\end{footnotesize}};
	
	\draw [line width=0.8pt, <-] (3.3,3) -- (-3.3,3);
	\draw (0, 3.3) node {\begin{footnotesize}$p_{ik}$\end{footnotesize}};

	\draw (-4,3) node {$\A_i$}; 
	
	\draw (4,3) node {$\A_{k}$};
		
	\draw [line width=0.8pt, ->] (-4,2.6) -- (-4,0.4);
	\draw (-4.6,1.7) node {\begin{footnotesize}$f_i$\end{footnotesize}};
	\draw (4.6,1.7) node {\begin{footnotesize}$f_{k}$\end{footnotesize}};
	\draw [line width=0.8pt, <-] (4,0.4) -- (4,2.6);
	
\end{tikzpicture}
\end{center}
\end{proof}

\begin{lemma}\label{lem: omomorfismo degli indici}
Let $\V$ be a variety, $ \mathbb{A} =\pair{\A_{i}, p_{ii'}, I} $, $ \mathbb{B}=\pair{\B_{j}, q_{jj'},J} $ be semilattice direct systems of algebras in $\V$ 
and $ h\colon \PLA\to \PLB $ a homomorphism. Let $ \phih\colon I\rightarrow J $ be a map such that $ h(A_{i})\subseteq B_{\varphi_{_h}(i)} $. Then $ \phih $ is a semilattice homomorphism.
\end{lemma}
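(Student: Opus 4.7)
The plan is to show that $\phih(i_1 \vee i_2) = \phih(i_1) \vee \phih(i_2)$ for all $i_1, i_2 \in I$, by feeding $h$ an element of $\PLA$ whose P\l onka fibre is pinned down by the join $i_1 \vee i_2$ and then reading off the fibre of $\PLB$ in which its image lies, in two different ways.

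First I would fix $i_1, i_2 \in I$, pick arbitrary $a_1 \in A_{i_1}$ and $a_2 \in A_{i_2}$ (the fibres are non-empty, being universes of algebras in $\V$), and select a basic operation $g$ of arity $n \geq 2$ in the type of $\V$, padding with $a_3 = \dots = a_n = a_1$. By the very definition of the P\l onka sum, the element $g^{\PLA}(a_1, a_2, a_3, \dots, a_n)$ lies in $A_{i_1 \vee i_2}$, since the join of the indices of its arguments collapses to $i_1 \vee i_2 \vee i_1 \vee \dots \vee i_1 = i_1 \vee i_2$.

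Second, I would apply $h$ in two ways. On the one hand, because $g^{\PLA}(a_1, \dots, a_n) \in A_{i_1 \vee i_2}$, the defining property of $\phih$ forces
\[
h\bigl(g^{\PLA}(a_1, \dots, a_n)\bigr) \in B_{\phih(i_1 \vee i_2)}.
\]
On the other hand, $h$ being a homomorphism yields
\[
h\bigl(g^{\PLA}(a_1, \dots, a_n)\bigr) = g^{\PLB}(h(a_1), \dots, h(a_n)),
\]
and each $h(a_k)$ lies in $B_{\phih(i_k)}$ (with $i_k = i_1$ for $k \geq 3$), so the P\l onka sum recipe applied inside $\PLB$ places this value in $B_{\phih(i_1) \vee \phih(i_2)}$. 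Disjointness of the fibres $\{B_j\}_{j\in J}$ then forces the two labels to coincide, giving the required equality.

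The only real subtlety is the availability of a basic operation of arity at least $2$ in the type of $\V$: this is what makes the P\l onka sum output fall into the join fibre whenever the inputs come from distinct fibres, and hence what actually ties the homomorphism $h$ to the semilattice structures on $I$ and $J$. For the varieties relevant to this paper (in particular, regularisations of strongly irregular varieties, which always carry a binary term operation witnessing the partition function) this is automatic, so the argument above goes through uniformly.
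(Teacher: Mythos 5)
Your proof is correct and follows essentially the same route as the paper: apply $h$ to an operation evaluated on elements drawn from the relevant fibres, compute the fibre of the result once via the definition of $\phih$ on the join and once via the homomorphism property together with the P\l onka sum recipe in $\PLB$, and conclude by disjointness of the fibres. Your explicit remark that the argument needs an operation (basic or term-definable, with all variables genuinely occurring) of arity at least $2$ is a point the paper's proof passes over silently, and your observation that the strongly irregular term $f(x,y)$ supplies it in the cases of interest is the right way to close that gap.
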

\begin{proof}
Let $ a_1, \dots, a_n\in \bigcup_{i\in I} A_i $, with $a_1\in A_{i_1},\dots, a_n\in A_{i_n}$ ($i_1,\dots i_n\in I$) and set $k=i_1\vee\dots\vee i_n$. We want to show that $\varphi_{_h}(k)= \varphi_{_h}(i_{1})\vee\dots\vee\varphi_{_h}(i_{n})$.

To simplify notation, set $\A=\PLA$ and $\B=\PLB$. Consider an arbitrary operation in the type of $\V$. Clearly, $h(f^{\A}(a_1,\dots,a_n))=f^{\B}(h(a_1),\dots,h(a_n)).$ By hypothesis, $h(a_1)\in B_{\varphi_{_h}(i_1)},\dots, h(a_n)\in B_{\varphi_{_h}(i_n)}$, therefore $f^{\B}(h(a_1),\dots,h(a_n))\in B_{j} $ with $j= \varphi(i_1)\vee\dots\vee \varphi(i_n)$. On the other hand, $f^{\A}(a_1,\dots a_n)\in A_k$, hence $h(f^{\A}(a_1,\dots a_n))\in B_{\varphi(k)}$. This shows that $\varphi_{_h}(k)= \varphi_{_h}(i_{1})\vee\dots\vee\varphi_{_h}(i_{n})$, i.e. is a semilattice homorphism. 

\end{proof}


\begin{theorem}\label{th: IBSL e strong-dir-BA sono equivalenti}
Let $\mathfrak{C}$ be a strongly irregular algebraic category. Then the categories $R(\mathfrak{C})$ and \emph{sem-dir}-$\mathfrak{C}$ are equivalent.
\end{theorem}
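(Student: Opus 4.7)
The plan is to exhibit a pair of functors $F\colon \text{sem-dir-}\mathfrak{C}\to R(\mathfrak{C})$ and $G\colon R(\mathfrak{C})\to \text{sem-dir-}\mathfrak{C}$ and show that they are quasi-inverse to each other. The functor $F$ is essentially the P\l onka sum construction: on objects set $F(\mathbb{A})\coloneqq \PL(\mathbb{A})$, which belongs to $R(\mathfrak{C})$ thanks to Theorem \ref{lem:Plonka:sums:vs:regular:equations}; on morphisms $(\varphi,f_i)\colon \mathbb{A}\to\mathbb{B}$, define $F(\varphi,f_i)$ to be the map $h$ supplied by Lemma \ref{lemma: morfismi tra somme di Plonka}. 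Preservation of identities is immediate, and preservation of composition reduces to a direct check that the componentwise composition of two morphisms of semilattice direct systems induces, via Lemma \ref{lemma: morfismi tra somme di Plonka}, the set-theoretic composition of the corresponding homomorphisms of P\l onka sums.

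Next I would define $G$ using the ``Plonka representation'' available in the strongly irregular setting. Given $\A\in R(\mathfrak{C})$, strong irregularity of $\mathfrak{C}$ supplies a binary term $t(x,y)$ satisfying $t(x,y)\approx x$ in every object of $\mathfrak{C}$; this term defines a partition function on $\A$, so Theorem \ref{th: Teorema di Plonka} produces a canonical semilattice direct system $G(\A)\coloneqq \langle \A_i,p_{ii'},I\rangle$ with $\PL(G(\A))=\A$ and each $\A_i$ in $\mathfrak{C}$. For a homomorphism $h\colon \A\to\B$ in $R(\mathfrak{C})$, Theorem \ref{th: omomorfismi tra somme di Plonka} guarantees that $h$ preserves the P\l onka fibres, hence induces a map $\varphi_h\colon I\to J$ with $h(A_i)\subseteq B_{\varphi_h(i)}$, which is a semilattice homomorphism by Lemma \ref{lem: omomorfismo degli indici}. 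Setting $f_i\coloneqq h\restriction A_i\colon \A_i\to \B_{\varphi_h(i)}$ yields a family of $\mathfrak{C}$-morphisms, and I would set $G(h)\coloneqq (\varphi_h,f_i)$.

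The step I expect to be the main obstacle is verifying that $(\varphi_h, f_i)$ is genuinely a morphism in sem-dir-$\mathfrak{C}$, i.e.\ that the square in Figure \ref{fig: diagramma strongly dir systems} commutes: given $i\leq i'$ and $a\in A_i$, I need $f_{i'}(p_{ii'}(a))=q_{\varphi_h(i)\varphi_h(i')}(f_i(a))$. The argument uses the explicit description of the P\l onka transition maps: pick any $b\in A_{i'}$; then $p_{ii'}(a)=t^{\A}(a,b)$ and $q_{\varphi_h(i)\varphi_h(i')}(f_i(a))=t^{\B}(f_i(a),f_{i'}(b))$ by item (3) of Theorem \ref{th: Teorema di Plonka}, so the commutativity follows from $h$ being a homomorphism applied to $t(a,b)$, together with fibre preservation. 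Functoriality of $G$ (identities and composition) then falls out from the uniqueness of $\varphi_h$ and from the fibrewise nature of $f_i$.

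To conclude, I would establish natural isomorphisms $F\circ G\cong \mathrm{Id}_{R(\mathfrak{C})}$ and $G\circ F\cong \mathrm{Id}_{\text{sem-dir-}\mathfrak{C}}$. The first is given on components by the equality $\PL(G(\A))=\A$ coming from Theorem \ref{th: Teorema di Plonka}(4); naturality with respect to a morphism $h$ amounts to checking that the homomorphism built from $(\varphi_h,f_i)$ via Lemma \ref{lemma: morfismi tra somme di Plonka} coincides with $h$, which is immediate since both agree on each fibre. The second natural isomorphism is witnessed, on a system $\mathbb{A}=\langle \A_i,p_{ii'},I\rangle$, by the identification of $\mathbb{A}$ with the system recovered from $\PL(\mathbb{A})$ through the partition function induced by the strongly irregular term $t(x,y)$; since the fibres, the indexing semilattice, and the transition maps of that recovered system coincide with the original data, this identification is in fact the identity, and naturality with respect to morphisms of systems is straightforward from the definitions.
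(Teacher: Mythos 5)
Your proposal is correct and follows essentially the same route as the paper: the same pair of functors (the P\l onka sum construction in one direction, the partition-function representation from Theorem \ref{th: Teorema di Plonka} together with Theorems \ref{th: omomorfismi tra somme di Plonka}, \ref{lem: omomorfismo degli indici} and Lemma \ref{lemma: morfismi tra somme di Plonka} in the other), with the natural isomorphisms given by the identification $\PL(G(\A))=\A$. In fact you supply more detail than the paper on the one step it dismisses as ``easily checked'', namely the commutativity of the square making $(\varphi_h, h\restriction A_i)$ a morphism of semilattice direct systems, and your argument via $p_{ii'}(a)=t^{\A}(a,b)$ is the right one.
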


\begin{proof}

The equivalence is proved via the following functors: 
\[
\begin{tikzcd}[row sep = tiny, arrows = {dash}]
& & \mathcal{F} &  & & \\
& & & & & \\
& & & & & \\
  & R(\mathfrak{C})\arrow[rr, bend left = 25, rightarrow] &  & \text{sem-dir-}\mathfrak{C}\arrow[ll, bend left = 25, rightarrow] &                                  &  \\
  & & & & & \\
& & & & & \\
& & \mathcal{G} & & &   
  \end{tikzcd}
  \] 
 
Let $\A$ be an object in the category $R(\mathfrak{C})$. Since $\mathfrak{C}$ is strongly irregular, by Theorem \ref{th: rappresentazione e strongly irregular varieties}, we know that $\A\cong\PLA$, with $\mathbb{A}$ a semilattice direct system of algebras in $\mathfrak{C}$.  $\mathcal{F}$ associates to $\A$ the semilattice direct system $\mathbb{A}$. 

Consider a morphism in $R(\mathfrak{C})$, $h\colon\A\to \B $ and set $\A\cong\PLA$, $\B\cong\PLB$, with with $\mathbb{A}=\pair{\A_i, p_{ii'}, I}$ and $\mathbb{B}=\pair{\B_j, q_{jj'}, J}$ semilattice direct systems of algebras in $\mathfrak{C}$. Since $\mathfrak{C}$ is a strongly irregular variety, we know, by Theorem \ref{th: omomorfismi tra somme di Plonka}, that $h$ preserves the P\l onka fibres of the direct system $\mathbb{A}$ (arising from the P\l onka sum representation of $\A$), i.e. $h(A_i)\subseteq B_j$, for some $j\in J$. Hence, we can define a map $\varphi_{_h}\colon I\to J$ satisfying the assumptions of Lemma \ref{lem: omomorfismo degli indici}, which assures that $\varphi_{_h}$ is a semilattice homomorphism. Moreover, for each $i\in I$, the restriction of $h$ over $\A_i$, $h_{|A_{i}}$ is a homomorphism of algebras (objects) in $\mathfrak{C}$. $\mathcal{F}$ associate to the morphism $h$, the pair $(\varphi_{h}, h_{|A_{i}} ) $. 

Moreover, it is easily checked that the following diagram is commutative for each $i\leq i'$ (indeed $i\leq i'$ implies $\varphi_{h}(i)\leq\varphi_{h}(i')$)
\begin{center}
\begin{tikzpicture}[scale=1]
\draw (-4,0) node {$ \B_{\varphi_{h}(i)} $};
	\draw (4,0) node {$ \B_{\varphi_{h}(i')} $};
	
	\draw [line width=0.8pt, ->] (-3.3,0) -- (3.3,0);
	\draw (0,-0.4) node {\begin{footnotesize}$q_{\varphi_{h}(i)\varphi_{h}(i')}$\end{footnotesize}};
	
	\draw [line width=0.8pt, <-] (3.3,3) -- (-3.3,3);
	\draw (0, 3.3) node {\begin{footnotesize}$p_{ii'}$\end{footnotesize}};

	\draw (-4,3) node {$\A_i$}; 
	
	\draw (4,3) node {$\A_{i'}$};
		
	\draw [line width=0.8pt, ->] (-4,2.6) -- (-4,0.4);
	\draw (-4.6,1.7) node {\begin{footnotesize}$h_{|A_{i}}$\end{footnotesize}};
	\draw (4.6,1.7) node {\begin{footnotesize}$h_{|A_{i'}}$\end{footnotesize}};
	\draw [line width=0.8pt, <-] (4,0.4) -- (4,2.6);
	
\end{tikzpicture}
\end{center}

Therefore $\mathcal{F}(h)$ is a morphism from $\mathbb{A}$ to $\mathbb{B}$, showing that $\mathcal{F}$ is a covariant functor. 

On the other hand, $\mathcal{G}$ associates to an object $\mathbb{A}$ in the category sem-dir-$\mathfrak{C}$, the P\l onka sum $\PLA$ over $\A$, which is an object in $R(\mathfrak{C})$ (as $\mathfrak{C}$ is strongly irregular). Moreover, to each morphism $(\varphi, f_i)$, $\mathcal{G}$ associates the map $h\colon\PLA\to\PLB$, defined as $h(a)\coloneqq f_{i}(a)$, for each $a\in A_i$ and $i\in I$. 
Lemma \ref{lemma: morfismi tra somme di Plonka} assures that $h$ is indeed a morphism in $R(\mathfrak{C})$. 

It is easy to check that the compositions of the two functors are naturally isomorphic with the identities (in both categories). 
\end{proof}

As already mentioned, many algebraic structures arising in the study of logics are strongly irregular, since they possess a lattice reduct. Considering those ones admitting topological duals, the combination of Theorem \ref{th: IBSL e strong-dir-BA sono equivalenti} with Theorem \ref{teorema: duality strong-dir strong-inv} allows to construct the topological dual of the regularization of a variety.

\begin{corollary}\label{cor: strong-dir-BA duale di strong-inv-SA}
Let $\mathfrak{C}$ be a dualisable strongly irregular algebraic category with $\mathfrak{C}^{*}$ as topological dual. Then the categories $R(\mathfrak{C})$ and \emph{sem-inv}-$\mathfrak{C}^{*}$ are dually equivalent.
\end{corollary}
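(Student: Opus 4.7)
My plan is to obtain the corollary as a direct composition of the two main structural results already established in the paper. The statement relates three categories, $R(\mathfrak{C})$, \emph{sem-dir}-$\mathfrak{C}$, and \emph{sem-inv}-$\mathfrak{C}^{*}$, each pair of which has already been shown to be (dually) equivalent under the relevant hypothesis; the job is simply to chain the equivalences correctly and verify that ``equivalence composed with dual equivalence is again a dual equivalence''.

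First I would invoke the hypothesis that $\mathfrak{C}$ is dualisable, so that $\mathfrak{C}$ and $\mathfrak{C}^{*}$ are dually equivalent, and then apply Theorem \ref{teorema: duality strong-dir strong-inv} to lift this to a dual equivalence between \emph{sem-dir}-$\mathfrak{C}$ and \emph{sem-inv}-$\mathfrak{C}^{*}$. Next, since $\mathfrak{C}$ is by assumption strongly irregular, Theorem \ref{th: IBSL e strong-dir-BA sono equivalenti} provides an ordinary equivalence between $R(\mathfrak{C})$ and \emph{sem-dir}-$\mathfrak{C}$, realised by the functors $\mathcal{F}$ and $\mathcal{G}$ constructed in its proof.

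To conclude, I would post-compose $\mathcal{F}\colon R(\mathfrak{C}) \to \emph{sem-dir-}\mathfrak{C}$ with the contravariant functor \emph{sem-dir}-$\mathfrak{C} \to \emph{sem-inv-}\mathfrak{C}^{*}$ supplied by Theorem \ref{teorema: duality strong-dir strong-inv}, and symmetrically compose in the other direction using $\mathcal{G}$ and the inverse dualising functor. Combining the natural isomorphisms witnessing each of the two equivalences yields natural isomorphisms between the compositions and the identity functors on $R(\mathfrak{C})$ and \emph{sem-inv}-$\mathfrak{C}^{*}$. No serious obstacle is expected: the entire argument is formal categorical bookkeeping, and the only point worth making explicit is that composing a covariant equivalence with a contravariant one reverses arrows exactly once, producing a genuine dual equivalence rather than an ordinary one.
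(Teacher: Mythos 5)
Your proposal is correct and matches the paper exactly: the corollary is obtained there as the immediate combination of Theorem \ref{th: IBSL e strong-dir-BA sono equivalenti} (the equivalence $R(\mathfrak{C})\simeq$ sem-dir-$\mathfrak{C}$) with Theorem \ref{teorema: duality strong-dir strong-inv} applied to the dual pair $\mathfrak{C}$, $\mathfrak{C}^{*}$. Your explicit remark about composing a covariant equivalence with a contravariant one is the only bookkeeping the paper leaves implicit.
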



It is worthless to say that, to our's best knowledge, the construction of P\l onka sum has no analogous on the side of the topological representation spaces, so the class \emph{sem-inv}-$\mathfrak{C}^{*}$ remains basically a collection of spaces organized into a semilattice inverse system. A partial attempt to fill this gap is \cite{BonzioLoi}. 

A related question concerns the possibility of describing semilattice inverse systems of topological spaces as a unique space. This is done in some known special cases, as distributive bisemilattices \cite{Romanowska}, the P\l onka sum of distributive lattices and involutive bisemilattices \cite{nostraduality}, the P\l onka sum of Boolean algebras.


\vspace{30pt}

\begin{center}
\textbf{Acknowledgments}
\end{center}
\vspace{5pt}
\noindent
The work of the author is supported by the European Research Council, ERC Starting Grant GA:639276: ``Philosophy of Pharmacology: Safety, Statistical Standards, and Evidence Amalgamation''. The author expresses his gratitude to Andrea Loi and Anna Romanowska for the fruitful discussions on the topics of this paper.


\begin{thebibliography}{10}

\bibitem{Bonzio16}
S.~Bonzio, J.~Gil-F\'erez, F.~Paoli, and L.~Peruzzi.
\newblock On {P}araconsistent {W}eak {K}leene {L}ogic: axiomatization and
  algebraic analysis.
\newblock {\em Studia Logica}, 105(2):253--297, 2017.

\bibitem{BonzioLoi}
S.~Bonzio and A.~Loi.
\newblock The {P}\l onka product of topological spaces.
\newblock Submitted manuscript, 2018.

\bibitem{nostraduality}
S.~Bonzio, A.~Loi, and L.~Peruzzi.
\newblock A duality for involutive bisemilattices.
\newblock Submitted manuscript, 2017.

\bibitem{Bonzio18}
S.~Bonzio, T.~Moraschini, and M.~Pra~Baldi.
\newblock Logics of variables inclusion.
\newblock Submitted manuscript, 2018.

\bibitem{CC1+}
R.~Ciuni and M.~Carrara.
\newblock Characterizing logical consequence in {P}araconsistent {W}eak
  {K}leene.
\newblock In L.~Felline, F.~Paoli, and E.~Rossanese, editors, {\em New
  Developments in Logic and the Philosophy of Science}. College, London, 2016.

\bibitem{Ciuni2}
R.~Ciuni, T.~Ferguson, and D.~Szmuc.
\newblock Logics based on linear orders of contaminating values.
\newblock Submitted manuscript, 2017.

\bibitem{Ciuni1}
R.~Ciuni, T.~Ferguson, and D.~Szmuc.
\newblock Relevant logics obeying component homogeneity.
\newblock {\em Australasian Journal of Logic}, Forthcoming.

\bibitem{cc}
M.~Coniglio and M.~Corbal\'an.
\newblock Sequent calculi for the classical fragment of {B}ochvar and
  {H}alld\'en's nonsense logics.
\newblock {\em Proceedings of LSFA 2012}, pages 125--136, 2012.

\bibitem{Davey99}
B.~Davey and B.~Knox.
\newblock Regularising natural dualities.
\newblock {\em Acta Mathematica Universitatis Comenianae. New Series},
  68(2):295--318, 1999.

\bibitem{Ferguson}
T.~Ferguson.
\newblock A computational interpretation of conceptivism.
\newblock {\em Journal of Applied Non-Classical Logics}, 24(4):333--367, 2014.

\bibitem{Ferguson2015}
T.~M. Ferguson.
\newblock Logics of {N}onsense and {P}arry {S}ystems.
\newblock {\em Journal of Philosophical Logic}, 44(1):65--80, 2015.

\bibitem{GaJiKoOn07}
N.~Galatos, P.~Jipsen, T.~Kowalski, and H.~Ono.
\newblock {\em Residuated Lattices: an algebraic glimpse at substructural
  logics}, volume 151 of {\em Studies in Logic and the Foundations of
  Mathematics}.
\newblock Elsevier, Amsterdam, 2007.

\bibitem{Romanowska}
G.~Gierz and A.~Romanowska.
\newblock Duality for distributive bisemilattices.
\newblock {\em Journal of the Australian Mathematical Society}, 51:247--275,
  1991.

\bibitem{Hallden}
S.~Halld\'en.
\newblock {\em The Logic of Nonsense}.
\newblock Lundequista Bokhandeln, Uppsala, 1949.

\bibitem{Kleene}
S.~Kleene.
\newblock {\em Introduction to Metamathematics}.
\newblock North Holland, Amsterdam, 1952.

\bibitem{Ledda2017}
A.~Ledda.
\newblock Stone-type representations and dualities for varieties of
  bisemilattices.
\newblock {\em Studia Logica}, 2017.
\newblock DOI: 10.1007/s11225-017-9745-9.

\bibitem{Libkintechnical}
L.~Libkin.
\newblock Algebraic characterization of edible powerdomains, 1993.
\newblock Technical Report (CIS).

\bibitem{Libkin}
L.~Libkin.
\newblock {\em Aspects of Partial Information in Databases}.
\newblock PhD Thesis, University of Pennsylvania, 1994.

\bibitem{shapebook}
S.~Marde{\v{s}}i{\'c} and J.~Segal.
\newblock {\em Shape Theory: The Inverse System Approach}.
\newblock North-Holland Mathematical Library. North-Holland, 1982.

\bibitem{tesiLuisa}
L.~Peruzzi.
\newblock {\em Algebraic approach to paraconsistent weak Kleene logic}.
\newblock PhD Thesis, University of Cagliari, 2018.

\bibitem{Petrukhin}
Y.~Petrukhin.
\newblock Natural deduction for fitting's four-valued generalizations of
  {K}leene's logics.
\newblock {\em Logica Universalis}, 2017.
\newblock DOI: 10.1007/s11787-017-0169-0.

\bibitem{Plo67}
J.~P{\l}onka.
\newblock On a method of construction of abstract algebras.
\newblock {\em Fundamenta Mathematicae}, 61(2):183--189, 1967.

\bibitem{Plo67a}
J.~P{\l}onka.
\newblock On distributive quasilattices.
\newblock {\em Fundamenta Mathematicae}, 60:191--200, 1967.

\bibitem{Plonka69}
J.~Plonka.
\newblock On sums of direct systems of boolean algebras.
\newblock {\em Colloq. Math.}, (21):209--214, 1969.

\bibitem{plonka1984sum}
J.~P{\l}onka.
\newblock On the sum of a direct system of universal algebras with nullary
  polynomials.
\newblock {\em Algebra {U}niversalis}, 19(2):197--207, 1984.

\bibitem{Romanowska92}
J.~P{\l}onka and A.~Romanowska.
\newblock Semilattice sums.
\newblock {\em Universal Algebra and Quasigroup Theory}, pages 123--158, 1992.

\bibitem{Prior}
A.~Prior.
\newblock {\em Time and {M}odality}.
\newblock Oxford University Press, Oxford, 1957.

\bibitem{Puhlmann}
H.~Puhlmann.
\newblock The snack powerdomain for database semantics.
\newblock In A.~M. Borzyszkowski and S.~Soko{\l}owski, editors, {\em
  Mathematical Foundations of Computer Science 1993}, pages 650--659, Berlin,
  Heidelberg, 1993. Springer Berlin Heidelberg.

\bibitem{Romanowska97}
A.~Romanowska and J.~Smith.
\newblock Duality for semilattice representations.
\newblock {\em Journal of Pure and Applied Algebra}, 115(3):289 -- 308, 1997.

\bibitem{romanowska2002modes}
A.~Romanowska and J.~Smith.
\newblock {\em Modes}.
\newblock World Scientific, 2002.

\bibitem{Romanowska96}
A.~Romanowska and J.~D.~H. Smith.
\newblock Semilattice-based dualities.
\newblock {\em Studia Logica}, 56(1/2):225--261, 1996.

\bibitem{Szmuc}
D.~Szmuc.
\newblock Defining {LFI}s and {LFU}s in extensions of infectious logics.
\newblock {\em Journal of {A}pplied non {C}lassical {L}ogics}, 26(4):286--314,
  2016.

\bibitem{Szmuctruth}
D.~Szmuc, B.~D. Re, and F.~Pailos.
\newblock Theories of truth based on four-valued infectious logics.
\newblock {\em Logic Journal of the IGPL}, forthcoming.

\end{thebibliography}
\end{document}